\theoremstyle{thmstyleone}%
\newtheorem{theorem}{Theorem}
\newtheorem{corollary}[theorem]{Corollary}%
\theoremstyle{thmstyletwo}%
\theoremstyle{thmstylethree}%
\begin{document}

\title[Hybrid local search for CECSP]{A hybrid local search algorithm for the Continuous Energy-Constrained Scheduling Problem}

\author*[1]{\fnm{Roel} \sur{Brouwer}}\email{r.j.j.brouwer@uu.nl}

\author[1]{\fnm{Marjan} \spfx{van den} \sur{Akker}}\email{j.m.vandenakker@uu.nl}

\author[1]{\fnm{Han} \sur{Hoogeveen}}\email{j.a.hoogeveen@uu.nl}

\affil*[1]{\orgdiv{Department of Information and Computing Sciences}, \orgname{Utrecht University}, \orgaddress{\street{Princetonplein 5}, \postcode{3584 CC} \city{Utrecht}, \country{The Netherlands}}}

\abstract{We consider the Continuous Energy-Constrained Scheduling Problem (CECSP).
A set of jobs has to be processed on a continuous, shared resource. A schedule for a job consists of a start time, completion time, and a resource consumption profile. We want to find a schedule such that: each job does not start before its release time, is completed before its deadline, satisfies its full resource requirement, and respects its lower and upper bounds on resource consumption during processing. Our objective is to minimize the total weighted completion time.
We present a hybrid local search approach, using simulated annealing and linear programming, and compare it to a mixed-integer linear programming (MILP) formulation. We show that the hybrid local search approach matches the MILP formulation in solution quality for small instances, and is able to find a feasible solution for larger instances in reasonable time.}

\keywords{continuous scheduling, resource-constrained scheduling, mixed-integer linear programming, simulated annealing}

\maketitle

\section{Introduction}\label{sec:intro}
When considering scheduling problems with resource constraints, jobs are often assumed to have a fixed duration or are not allowed to change the amount of resource they consume over time. In certain applications, however, these assumptions are too limiting. Typically, such cases can be found in areas where a (cumulative) amount of work (or resource) is required to complete a job, but the rate of consumption is not necessarily fixed, such as energy related applications. Think of demand-response in electricity consumption and electric vehicle charging, for example. To find schedules that allow for this, we consider an extension of the traditional scheduling problem, previously introduced as the Continuous Energy-Constrained Scheduling Problem (CECSP) by \cite{Nattaf2014}.

This problem is described as follows. A set $\{J_1,\ldots,J_n\}$ of jobs has to be processed on a continuous resource $P$. This means that, at any time, multiple jobs can be processed simultaneously and at different rates, as long as their total consumption does not exceed $P$. Each job $J_j$ requires a total amount of resource equal to $E_j$. We want to find a schedule such that: each job $J_j$ does not start before its release time $r_j$, is completed before its deadline $\bar{d}_j$, and respects its lower and upper bounds ($P^-_j,P^+_j$) on the resource consumption rate during processing. Preemption is not allowed: from its start until its completion, each job must consume resources at a rate of $P^-_j$ units. Our objective is to minimize the total weighted completion time.
We look at the case where both the resource and time are continuous. We assume that there is no efficiency function influencing resource consumption, and no explicit precedence relations exist between jobs. It should be noted that the presented approach can easily be extended to deal with (piece-wise) linear efficiency functions and precedence relations. The present work, however, does not include these extensions.

\vspace*{1em}
\textbf{Our contribution}. We present a hybrid local search approach (Section \ref{sec:hybrid-ls}), using simulated annealing and linear programming, to solve instances of the CECSP. In addition, we provide a mixed-integer linear programming (MILP) formulation (Section \ref{sec:milp}) and compare the performance of our hybrid approach to it in terms of quality and runtime. Finally, we found that the feasibility problem is solvable in polynomial time if we drop the lower bounds on resource consumption from the problem. We formulate a flow-based solution algorithm for this case (Section \ref{sec:flow}), that we use to select problem instances for the computational tests.

As far as we are aware, we are the first to propose a decomposition and try a combination of local search and mathematical programming techniques for CECSP. We show that it can compete with exact approaches on small instances, and has the potential to find good solutions for larger ones. Furthermore, the approach can be adapted to be applied to related (energy-constrained) scheduling problems.

\vspace*{1em}
The rest of this work is structured as follows. In Section \ref{sec:rel-work}, we discuss related work, followed by a detailed problem description in Section \ref{sec:prob-desc}. We explain our event-based modeling approach in Section \ref{sec:modeling}. From there, we build a MILP formulation in Section \ref{sec:milp}, and introduce our hybrid local search approach in Section \ref{sec:hybrid-ls}. We discuss how our test instances are generated in Section \ref{sec:instances}, which includes a description of our flow-based algorithm for the feasibility problem without lower bounds. In Section \ref{sec:results} we discuss the results, followed by the conclusions and suggestions for future work in Section \ref{sec:conclusion}.

\section{Related Work}\label{sec:rel-work}
The CECSP can be seen as a variant of the Resource Constrained Project Scheduling Problem (RCPSP). This problem concerns the scheduling of activities subject to precedence and resource constraints. The surveys by \cite{Hartmann2010,Hartmann2022} provide a good overview.
\cite{Baptiste1999} defined the Cumulative Scheduling Problem (CuSP) as a subproblem of the RCPSP. In the case of CuSP, we have a single resource with a given capacity, and we need to schedule a number of activities without exceeding this capacity. Each activity has a release time, deadline, (fixed) processing time and (constant) resource capacity requirement. More recent work further defined the CECSP as a generalization of CuSP \citep{Nattaf2014} and provided a hybrid branch-and-bound algorithm \citep{Nattaf2017} and mixed-integer linear program \citep{Nattaf2019} to find exact solutions for small instances. In the CECSP, the resource capacity requirement is a range with a lower and upper bound, rather than a fixed value, and the consumption rate can vary during the execution of the job. As a result, the processing time is no longer fixed, but depends on the consumption rate during its execution. Through its relation to CuSP, \cite{Nattaf2016} proved that finding a feasible solution for CECSP is NP-complete.

The CECSP is also closely related to the scheduling of malleable jobs (as introduced by \cite{Turek1992}) on parallel machines, which involves the scheduling of jobs on $P$ machines, while the number of machines assigned to a job can change during its execution.

\begin{table}
	\begin{center}
		\caption{Example instance of the CECSP with three jobs}
		\label{tab:example-instance}
		\begin{tabular}{@{}l|lllllll@{}}
			\toprule
			$j$ & $E_j$ & $r_j$ & $\bar{d}_j$ & $P^-_j$ & $P^+_j$ & $w_j$ & $B_j$ \\
			\midrule
			1 & 70.0 & 0.0 & 3.0 & 10.0 & 30.0 & 1.0 & 0.0 \\
			2 & 20.0 & 1.5 & 3.0 & 10.0 & 40.0 & 3.5 & 0.0 \\
			3 & 45.0 & 2.5 & 4.0 & 10.0 & 50.0 & 5.0 & 0.0 \\
			\botrule
		\end{tabular}
	\end{center}
\end{table}

\begin{figure}
	\centering
	\includegraphics[width=4.5cm]{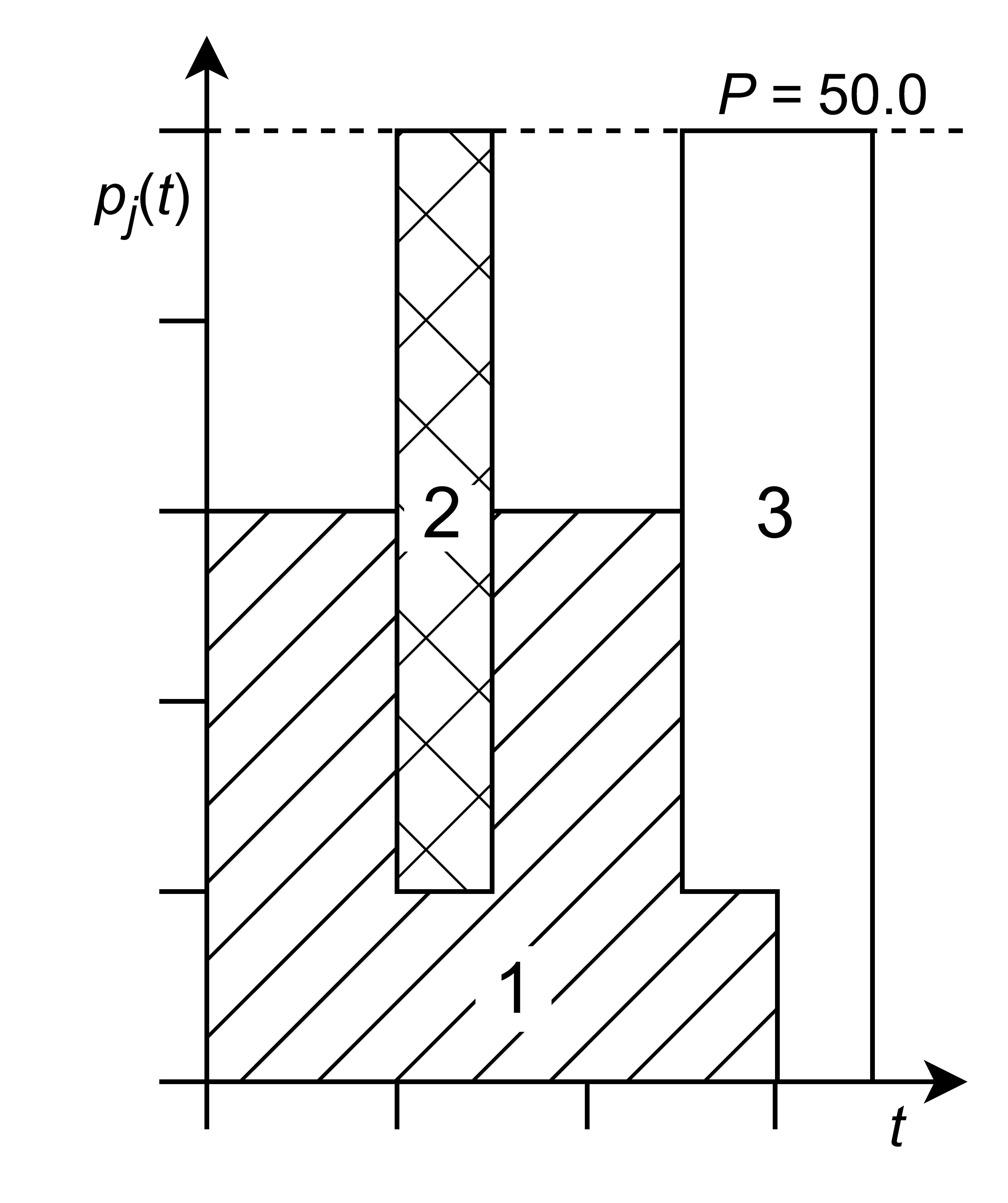}
	\caption{Optimal schedule for the example instance with three jobs}
	\label{fig:example-instance}
\end{figure}

\section{Problem description}\label{sec:prob-desc}
An instance of the CECSP problem is defined by the following properties:
\begin{itemize}
	\item Available resource: constant $P$;
	\item For each job $J_j,$ $j \in \{1, ..., n \}$:
	\begin{itemize}
		\item Resource requirement $E_j$;
		\item Release time $r_j$;
		\item Deadline $\bar{d}_j$;
		\item Lower bound $P^-_j$;
		\item Upper bound $P^+_j$;
		\item Linear cost function $w_j \cdot C_j + B_j$.
	\end{itemize}
\end{itemize}
For each job, we need to determine a start time $S_j$, a completion time $C_j$ and a resource consumption profile $p_j(t)$ such that $\sum_j w_j C_j + B_j$ is minimal, while the following constraints are respected:
\begin{itemize}
	\item[C1] The amount of resource a job $j$ consumes is exactly $E_j$;
	\item[C2] A job $j$ does not start before its release time $r_j$, i.e. $S_j \geq r_j$;
	\item[C3] A job $j$ completes no later than its deadline $\bar{d}_j$, i.e. $C_j \leq \bar{d}_j$;
	\item[C4] A job $j$ only consumes resources between its start and completion time, i.e. $p_j(t) = 0$ for $t < S_j$ or $t \geq C_j$;
	\item[C5] While active, the resource consumption of a job $j$ never drops below $P_j^-$ or rises above $P_j^+$, i.e. $P_j^- \leq p_j(t) \leq P_j^+$ for $S_j \leq t \leq C_j$;
	\item[C6] The total amount of resource consumed by all jobs together at any given time can never exceed $P$, i.e. $\sum_j p_j(t) \leq P$ for all possible values of $t$.
\end{itemize}

An example instance with three jobs and $P = 50.00$ is provided in Table \ref{tab:example-instance}. Figure \ref{fig:example-instance} gives a visual representation of the optimal schedule for this instance.

Comparing the problem description above to the version originally introduced by \cite{Nattaf2014}, we notice two differences:
\begin{enumerate}
	\item Our version does not consider efficiency functions (i.e., the amount of work done is equal to the amount of resource consumed), while in the work of Nattaf et al., (piece-wise) linear efficiency functions are considered (i.e., the amount of work done is a linear function of the amount of resource consumed).
	\item Our objective function minimizes the total weighted completion time, while the objective in the work of Nattaf et al. is to minimize the amount of resource consumed.
\end{enumerate}

\section{Modeling: event-based approach}\label{sec:modeling}
In this work, we will use an event-based model of the problem. This means that we define a schedule by determining the timing of \textit{events}, and the activity of jobs during the \textit{intervals} that are bounded by these events.

We consider two types of events, associated with a job $J_j$: its start $S_j$ and completion $C_j$. Now let us consider a list $\mathcal{E}$, that lists all events in the order that they occur over time.

For a given order of events, we can divide the schedule into $2n-1$ intervals, where each interval is bounded by two consecutive events. In the following, when we refer to an interval, we always mean the span of time in between two consecutive events. So, no event ever occurs during an interval. Also note that an interval can be empty, if it is bounded by two events that happen at the same time.

During these intervals, the set of jobs that are being processed does not change. We can show that there always exists an optimal solution in which $p_j(t)$ remains constant during each interval. We can therefore limit ourselves to such solutions.

\begin{theorem}
	For any feasible schedule $\mathcal{S}$ (with start times $S_j$, completion times $C_j$ and resource consumption profiles $p_j(t)$ for all jobs $J_j, j \in \{1, ..., n\}$) that follows a given event order $\mathcal{E}$, a feasible schedule $\mathcal{S}'$ exists with the same objective value where the resource consumption $p_j'(t)$ of all jobs remains constant during each interval.
\end{theorem}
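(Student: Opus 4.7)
The plan is to construct $\mathcal{S}'$ explicitly by averaging each job's consumption rate over every interval, and then to verify each of the constraints C1--C6 together with the objective value in turn.

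First I would fix the event order $\mathcal{E}$, letting $t_0 \le t_1 \le \dots \le t_{2n-1}$ denote the chronological times of the events, so that the intervals are $I_k = [t_k, t_{k+1}]$ for $k = 0, \dots, 2n-2$. For each job $J_j$ and each interval $I_k$ with $t_{k+1} > t_k$, I would define
\[
  \bar{p}_j^{\,k} \;=\; \frac{1}{t_{k+1}-t_k}\int_{t_k}^{t_{k+1}} p_j(t)\,dt,
\]
and set $p'_j(t) = \bar{p}_j^{\,k}$ for $t \in I_k$; on empty intervals the value is irrelevant. The start and completion times in $\mathcal{S}'$ are kept equal to those in $\mathcal{S}$, so C2 and C3 are immediate, and C4 holds because $p_j(t) = 0$ outside $[S_j, C_j]$ forces $\bar{p}_j^{\,k} = 0$ on any interval disjoint from $[S_j, C_j]$ (such intervals exist precisely because $\mathcal{E}$ is respected).

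Next I would verify the energy and bound constraints. For C1, integrating over $[S_j, C_j]$ and using that the $I_k$ partition this interval yields $\int_{S_j}^{C_j} p'_j(t)\,dt = \sum_k (t_{k+1}-t_k)\bar{p}_j^{\,k} = \int_{S_j}^{C_j} p_j(t)\,dt = E_j$. For C5, since $P_j^- \le p_j(t) \le P_j^+$ pointwise on $[S_j, C_j]$, the same bounds hold for the average $\bar{p}_j^{\,k}$ on each nonempty sub-interval. For C6, on any interval $I_k$ we have
\[
  \sum_j p'_j(t) \;=\; \sum_j \bar{p}_j^{\,k} \;=\; \frac{1}{t_{k+1}-t_k}\int_{t_k}^{t_{k+1}} \sum_j p_j(\tau)\,d\tau \;\le\; P,
\]
where the inequality follows from applying C6 to $\mathcal{S}$ pointwise and integrating. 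Finally, the objective $\sum_j (w_j C_j + B_j)$ depends only on completion times, which are unchanged, so $\mathcal{S}'$ has the same objective value as $\mathcal{S}$.

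The argument is largely routine; the only mild subtlety is ensuring the integrals are well defined, which is harmless under the implicit assumption that $p_j(t)$ is (say) bounded and measurable, and handling the degenerate case where $t_{k+1} = t_k$, for which the interval contributes nothing to any constraint and can be ignored. I would mention these points briefly but not belabor them.
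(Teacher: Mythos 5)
Your proposal is correct and follows essentially the same route as the paper's own proof: replace each job's consumption on every interval by its interval average, observe that averaging preserves the per-job bounds and the capacity constraint, and note that the objective depends only on the (unchanged) completion times. Your write-up is in fact slightly more thorough, since you explicitly check C1 and C4 and flag the empty-interval and measurability caveats, which the paper passes over with ``no other constraints are affected.''
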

\begin{proof}
	Consider any interval between two events, $i$ and $i'$. The interval starts at $t_i$ and ends at $t_{i'}$.
	
	A job $J_j$ is inactive during this interval if $t_i \geq C_j$ or $t_{i'} \leq S_j$ in the schedule, and active otherwise.
	
	The resource consumption profile of any active job $J_j$ in $\mathcal{S}$ within this interval does not violate the lower and upper bounds for this job: $P^-_j \leq p_j(t) \leq P^+_j \ \forall t, t_i \leq t \leq t_{i'}$.
	
	Therefore, the average consumption of job $J_j$ within this interval, $\bar{p}_{j}= \frac{\int_{t_i}^{t_{i'}} p_j(t)\ dt}{t_{i'} - t_i}$, does not exceed the bounds on consumption for job $J_j$: $P^-_j \leq \bar{p}_{j} \leq P^+_j$.
	
	In this interval, all active jobs in $\mathcal{S}$ together consume at most ($t_{i'}-t_i)P$ of the resource, i.e.:
	
	\[\sum_j\bar{p}_{j} (t_{i'}-t_i) \leq (t_{i'}-t_i)P \]
	
	From this, it follows that $\sum_j\bar{p}_{j} \leq P$, i.e., the sum of the average consumption of all jobs does not exceed the available amount of resource.
	
	We obtain $\mathcal{S}'$, from $\mathcal{S}$ by replacing all resource consumption profiles $p_j(t)$ with a step-wise constant function $p_j'(t)$ that remains constant during any interval (in between two consecutive events), and takes the value $\bar{p}_{j}$, i.e. the average resource consumption of job $J_j$ in $\mathcal{S}$, during that interval.
	
	We have shown that the schedule $\mathcal{S}'$ is indeed feasible. For each job, the resulting consumption rate is within the bounds allowed for that job and the total amount of available resource is not exceeded either. No other constraints are affected.
	
	It remains to show that $\mathcal{S}'$ has the same objective value. The objective depends only on the value of $C_j$. As the timing of all events in $\mathcal{S}'$ is identical to that in $\mathcal{S}$, the objective is not affected by the modifications to the schedule.
\end{proof}
\begin{corollary}
	For any optimal schedule $\mathcal{S}^*$, there exists an optimal schedule ${\mathcal{S}^*}'$ where the resource consumption ${p_j^*}'(t)$ of all jobs remains constant during each interval.
\end{corollary}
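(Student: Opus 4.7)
The plan is to observe that the corollary is an immediate specialization of the preceding theorem to the case where the input schedule happens to be optimal. So I would not re-prove anything structural; I would just run the theorem's construction on an optimal schedule and check that optimality is preserved.

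Concretely, first I would take any optimal schedule $\mathcal{S}^*$. By definition $\mathcal{S}^*$ is feasible, and it induces a well-defined event order $\mathcal{E}^*$ (the chronological ordering of the $S_j$'s and $C_j$'s in $\mathcal{S}^*$; ties can be broken arbitrarily, since the theorem is stated for any given order). Then I would apply the theorem to $\mathcal{S}^*$ with this event order. This produces a feasible schedule ${\mathcal{S}^*}'$ whose resource consumption profiles ${p_j^*}'(t)$ are constant on each interval of $\mathcal{E}^*$.

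Next I would argue optimality of ${\mathcal{S}^*}'$. The theorem guarantees that ${\mathcal{S}^*}'$ has the same objective value as $\mathcal{S}^*$, because it does not alter any start or completion time and the objective $\sum_j w_j C_j + B_j$ depends only on the $C_j$. Since $\mathcal{S}^*$ attains the minimum possible objective value among feasible schedules, ${\mathcal{S}^*}'$ does as well, hence ${\mathcal{S}^*}'$ is optimal.

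There is essentially no obstacle here: the entire content lies in the theorem, and the corollary only needs the trivial observation that ``same objective as an optimum'' implies ``also optimum.'' The only tiny subtlety worth flagging is that the theorem is phrased relative to a fixed event order $\mathcal{E}$, so I would make explicit that the event order used is simply the one induced by $\mathcal{S}^*$ itself, so that the hypothesis of the theorem is satisfied without any further choice.
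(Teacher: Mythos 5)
Your proposal is correct and matches the paper's intent exactly: the paper states the corollary without proof as an immediate consequence of the theorem, and your argument — apply the theorem to $\mathcal{S}^*$ with its induced event order, then note that equal objective value to an optimum implies optimality — is precisely that immediate specialization.
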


This means that we can ignore solutions with a non-constant resource consumption profile for any job during any interval. Therefore, we can fully represent a resource consumption profile of job $J_j$ with at most $2n-1$ values $p_{j,i}$, one for each interval in which $J_j$ is being processed, indicating the total amount of resource consumed during that interval.

In the following, we will maintain a link between events and jobs by their index. Every job has two associated events. For a job with index $j$, we will say that its start event has index $i = 2j - 1$, and its completion event has index $i + 1 = 2j$. This allows us to uniquely identify the start and completion events of any job easily in notation. \emph{The index of an event $i$ does not indicate its position in the event order.} We will maintain this indexing scheme throughout this work.

Furthermore, intervals are defined by the event at the start of that interval. For example, let $i$ and $i'$ be the indices of the first two events. The first interval starts when the first event happens: at $t_i$; and ends when the next event starts: at $t_{i'}$. Then, $p_{j,i}$ represents the amount of resource that job $J_j$ consumes in the interval associated with event $i$, spanning the time between $t_i$ and $t_{i'}$.

\section{Mixed-integer linear programming formulation}\label{sec:milp}
Using the event-based approach, we developed a MILP formulation that will find an optimal schedule. In this section, we gradually build this formulation.

Although we arrived at it independently, our formulation is similar to the event based formulations presented by \cite{Nattaf2016}. The models share their focus on intervals, delimited by events, but differ in how the order of these events is expressed. \cite{Nattaf2016} use assignment variables for this: binary variables that indicate whether events occur at a given position in the order, or whether a job is active during a given interval. Our model uses binary ordering variables, to indicate the relative order between pairs of events.

First, we define the decision variables of our model. For each of the $2n$ events $i$, $t_i \geq 0$ will denote the time at which it takes place. 
For each job $J_j$ and event $i$, $p_{j,i} \geq 0$ is the amount of resource that $J_j$ consumes during the interval defined by event $i$. We know the consumption of a job $J_j$ during the interval it completes to be 0 and therefore set $p_{j, 2j} = 0$.
Note that we do not know beforehand what event will be the last in the event order. The interval that is defined by that event is, theoretically, open-ended. However, in the formulations below you will note that any $p_{j,i}$ associated with this interval is forced to 0, as all jobs complete in or before the interval.

Additionally, we define binary variables $a_{i,i'}$ and $b_{i,i'}$ to determine the order of events. These can be viewed as helper variables.
For two events $i$ and $i'$, $a_{i,i'}$ represents the order they occur in. If $a_{i,i'} = 1$, then $i$ occurs before $i'$ in the event order. 
On top of that, $b_{i,i'}$ indicates whether an event is an immediate successor of another: $b_{i,i'} = 1$ if $i'$ occurs immediately after $i$.
In our formulation, we will use $a_{i,i'}$ where possible and $b_{i,i'}$ if we have to: exclusively for the enforcement of lower bounds.

The objective is to minimize the combined cost functions of the jobs:

\begin{equation}
	\min \sum\limits_{j=1}^n\left(w_j t_{2j} + B_j\right)
	\label{eq:lp-objective-mip}
\end{equation}

Now, let us go through the constraints we listed in Section \ref{sec:prob-desc} above. 

First, we ensure that the total amount of resource consumed by each job $J_j$ equals $E_j$ (C1), $\forall j \in \{1, ..., n\}$:

\begin{equation}
	\sum\limits_{i = 1}^{2n} p_{j,i} = E_j \label{eq:lp-totalwork-mip}
\end{equation}

Next, we enforce the release time (C2) and deadline (C3) for each job $J_j$. Since we can identify the start and completion event of a job by its index $i$, we obtain that $\forall j \in \{1, ..., n\}$:

\begin{equation}
	t_{2j - 1} \geq r_j \label{eq:lp-releasedate-mip}
\end{equation}
\begin{equation}
	t_{2j} \leq \bar{d}_j \label{eq:lp-deadline-mip}
\end{equation}

We ensure that $p_{j,i} = 0$ for any interval after the completion event of $J_j$ (with index $2j$) has happened or before its start event (with index $2j - 1$) has taken place (C4). We use a big $M$ constraint with the order variables $a_{i,i'}$ to enforce this exactly for those intervals. So $\forall j \in \{1, ..., n\}, i \in \{1, ..., 2n\}$, and sufficiently large $M$ ($M=E_j$ will suffice) we have:

\begin{equation}
	p_{j,i} \leq a_{i, 2j}M \label{eq:lp-powerupperbound2-mip}
\end{equation}
\begin{equation}
	p_{j,i} \leq a_{2j - 1, i}M \label{eq:lp-powerupperbound3-mip}
\end{equation}

In between the start and completion events of a job $J_j$, we need to choose values for $p_{j,i}$ that respect the lower ($P^-_j$) and upper ($P^+_j$) bounds on resource consumption (C5). The upper bound can be enforced in between any two events. We use a big $M$ constraint with order variable $a_{i,i'}$ to activate the constraint only for events that happen in the right chronological order, hence $\forall j \in \{1, ..., n\}, i, i' \in \{1, ..., 2n\}, i \neq i'$, and sufficiently large $M$:

\begin{equation}
	p_{j,i} \leq P_j^+ (t_{i'} - t_{i}) + Ma_{i',i} \label{eq:lp-powerupperbound-mip}
\end{equation}

 The lower bound only applies for consecutive events, which both fall within the execution window of job $J_j$. We use a big $M$ term with the $b_{i,i'}$ helper variable to activate the constraint only for events that are immediate successors. Besides that, we include the variables $a_{i',2j-1}$ and $a_{2j,i}$ to make sure the constraint is only active if $i$ and $i'$ are between the start event of job $J_j$ (index $2j - 1$) and its completion (index $2j$). Hence $\forall j \in \{1, ..., n\}, i, i' \in \{1, ..., 2n\}, i \neq i'$, and sufficiently large $M$:

\begin{equation}
\begin{aligned}
	p_{j,i} \geq & & P_j^- (t_{i'} - t_{i}) \\ & & - (1-b_{i,i'} + a_{i',2j-1} + a_{2j,i})M \label{eq:lp-powerlowerbound-mip}
\end{aligned}
\end{equation}

During an interval the total resource consumption of all busy jobs cannot exceed $P$ at any time. This has to be enforced for all intervals, $\forall i, i' \in \{1, ..., 2n\}, i \neq i'$, and sufficiently large $M$:

\begin{equation}
	\sum\limits_{j=1}^n p_{j,i} \leq P(t_{i'} - t_{i}) + Ma_{i',i} \label{eq:lp-powercapacity-mip}
\end{equation}

We need to make sure that the event times are consistent with the variables $a_{i,i'}$ that control the order of events.
Therefore, $\forall i, i' \in \{1, ..., 2n\}, i \neq i'$, and sufficiently large $M$, we get:

\begin{equation}
	t_i \leq t_{i'} + Ma_{i',i} \label{eq:lp-intervalorder-mip}
\end{equation}

We ensure that either $a_{i,i'} = 1$ or $a_{i',i} = 1$, but not both, for any pair of distinct events. We could, of course, get rid of half of our binary variables by expressing $a_{i',i}$ in terms of $a_{i,i'}$ for all $i' > i$. We choose not to do so for ease of notation, therefore we need to ensure that $\forall i, i' \in \{1, ..., 2n\}, i < i'$:

\begin{equation}
	a_{i,i'} + a_{i',i} = 1 \label{eq:lp-order-doubles-mip}
\end{equation}

We can note here that we also already know: $\forall j\in \{1, ..., n\}$, $a_{2j - 1,2j} = 1$.

Finally, we need some constraints to make sure the $b_{i,i'}$ variables indicate events that are immediate successors. First making sure that $b_{i,i'}$ can only be 1 if $i'$ occurs immediately after $i$, $\forall i, i' \in \{1, ..., 2n\}, i \neq i'$, and sufficiently large $M$:

\begin{equation}
	\sum\limits_{i'' = 1}^{2n} a_{i,i''} - \sum\limits_{i'' = 1}^{2n} a_{i',i''} \leq 1 + (1 - b_{i,i'})M \label{eq:lp-order-succ1-mip}
\end{equation}
\begin{equation}
	\sum\limits_{i''=1}^{2n}a_{i,i''} - \sum\limits_{i''=1}^{2n} a_{i',i''} \geq 1 - (1 - b_{i,i'})M \label{eq:lp-order-succ2-mip}
\end{equation}

And second, by requiring that exactly (or equivalently: at least) $2n-1$ variables $b_{i,i'}$ actually take the value of 1:

\begin{equation}
	\sum_{i=1}^{2n} \sum_{i' \neq i} b_{i,i'}= 2n - 1 \label{eq:lp-order-succ3-mip} \\
\end{equation}

This completes the initial MILP formulation. However, we have added two types of valid inequalities to speed up the process of solving the model. We know that the range of feasible processing times for a job is limited by the lower and upper bound on its resource consumption, $\forall j \in \{1, ..., n\}$:

\begin{equation}
	t_{2j} - t_{2j - 1} \leq E_j / P^-_j
	\label{eq:lp-valid-upper-limit-mip}
\end{equation}
\begin{equation}
	t_{2j} - t_{2j - 1} \geq E_j / P+_j \label{eq:lp-valid-lower-limit-mip}
\end{equation}

Indeed, the addition of these inequalities has a significant impact on the time needed to find an optimal solution.
The MILP formulation that our hybrid local search approach will be tested against consists of all equations \eqref{eq:lp-objective-mip} - \eqref{eq:lp-valid-lower-limit-mip}.

\section{Hybrid local search}\label{sec:hybrid-ls}
In the MILP formulation described above, we can identify two separable parts to the problem. The binary variables $a_{i,i'}$ (and $b_{i,i'}$) fully determine the order of events $\mathcal{E}$. Then, the continuous variables $t_i$ and $p_{j,i}$ determine the exact schedule for this order.

We can decompose the problem into these two parts. Once we have an order of events $\mathcal{E}$, determining an optimal schedule boils down to solving an LP, as all binary variables disappear from the formulation once the event order is fixed.

A global overview of our approach is given in Algorithm \ref{alg:overview}. The elements in this overview will be discussed in more detail in the following sections.

\begin{algorithm}
	\caption{Hybrid local search approach}\label{alg:overview}
	\begin{algorithmic}[1]
		\State $prec \gets $ ImplicitPrecedences($J$)
		\State $\mathcal{E}, best\_score \gets $ InitialSolution($J$, $P$, $prec$)
		\State $T \gets T_{init}$
		\For{$iter \in \{1, ..., max\_iter\}$}
			\State $\mathcal{E}, score \gets $ GetNeighbor($\mathcal{E}$, $T$, $J$, $P$, $prec$)
			\If{$score < best\_score$}
				\State $\mathcal{E}_{best}, best\_score \gets \mathcal{E}, score$
			\EndIf
			\If{$iter \% \alpha_{period} = 0$}
				\State $T \gets T \cdot \alpha$
			\EndIf
		\EndFor
		\State \Return{$\mathcal{E}_{best}, best\_score$}
	\end{algorithmic}
\end{algorithm}

\subsection{Finding an optimal schedule for a given order: linear program}
We will adapt the MILP formulation described above to solve only the (sub)problem of finding an optimal schedule for a given event order.

The LP will consist of Equation \eqref{eq:lp-objective-mip} - \eqref{eq:lp-deadline-mip} and \eqref{eq:lp-powerupperbound-mip} - \eqref{eq:lp-intervalorder-mip}, eliminating any part of the inequalities that contain the variables $a_{i,i'}$ and $b_{i,i'}$.

Instead of enforcing $p_{j,i}$ to be 0 outside of $J_j$'s processing window (as in Equation \eqref{eq:lp-powerupperbound2-mip} and \eqref{eq:lp-powerupperbound3-mip}), we only take into account $p_{j,i}$ for $i$ within the processing window, which can easily be done as we already know the order of events. Similarly, upper and lower bounds are only enforced on intervals within the processing window.

For ease of notation, let us define two functions:
\begin{itemize}
	\item $I(e)$ gives the event index $i$ of the event in position $e$ of the event order;
	\item $E(i)$ gives the position in the event order $e$ of the event with index $i$.
\end{itemize}
Note that these functions are known already before we attempt to solve the LP, as they only depend on the event order.

So, for example the inequality enforcing the order of events (Equation \eqref{eq:lp-intervalorder-mip} in the MILP formulation) now becomes, $\forall e \in \{1, ..., 2n - 1\}$:
\begin{equation}
	t_{I(e)} \leq t_{I(e + 1)} \label{eq:lp-intervalorder}
\end{equation}
And the inequality controlling the total work of a job (Equation \eqref{eq:lp-totalwork-mip} in the MILP formulation) now becomes, $\forall j \in \{1, ..., n\}$:
\begin{equation}
	\sum\limits_{e \in \{E(2j - 1), ..., E(2j) - 1\}} p_{j,I(e)} = E_j \label{eq:lp-totalwork}
\end{equation}
Note that $\{E(2j - 1), ..., E(2j) - 1\}$ represents the set of positions in the event order between the start and completion event of a job $J_j$, including the former and excluding the latter.

Finally, to evaluate the relative quality of infeasible event orders, we allow for the violation of lower bounds, upper bounds, and/or resource availability constraints, with appropriate cost. For this purpose, we introduce three types of slack variables:
\begin{itemize}
	\item $s^-_{j,i}$ allowing the resource assignment to job $J_j$ in interval $i$ to go below its lower bound $P^-_j$;
	\item $s^+_{j,i}$ allowing the resource assignment to job $J_j$ in interval $i$ to go over its upper bound $P^+_j$;
	\item $s^t_i$ allowing for increasing the amount of available resource during an interval $i$.
\end{itemize}
This affects the inequalities enforcing these bounds in the LP (Equation \eqref{eq:lp-powerupperbound-mip} - \eqref{eq:lp-powercapacity-mip} in the MILP formulation), $\forall j \in \{1, ..., n\}, e \in \{E(2j - 1), ..., E(2j) - 1\}$:
\begin{equation}
	p_{j,I(e)} \geq P_j^- (t_{I(e+1)} - t_{I(e)}) -s^-_{j,I(e)} \label{eq:lp-powerlowerbound}
\end{equation}
\begin{equation}
	p_{j,I(e)} \leq P_j^+ (t_{I(e+1)} - t_{I(e)}) +s^+_{j,I(e)} \label{eq:lp-powerupperbound}
\end{equation}
And, $\forall e \in \{1, ..., 2n - 1\}$:
\begin{equation}
	\sum\limits_{j=1}^n p_{j,I(e)} \leq P(t_{I(e+1)} - t_{I(e)}) + s^t_{I(e)} \label{eq:lp-powercapacity}
\end{equation}

These slack variables are penalized in the objective function, using $L^B$ for violation of upper and lower bounds and $L^R$ for excessive resource usage. The slack variables are non-negative, but do not have an enforced upper bound. This adds the following term to the objective:
\begin{equation}
	\sum\limits_{i = 1}^{2n-1} \left(L^R s^t_i + \sum\limits_{j=1}^n L^B(s^-_{j,i} + s^+_{j,i})\right) \label{eq:lp-objective-add}
\end{equation}

This allows us to get an idea of the quality of infeasible event orders that we can use to guide the local search towards solutions that are closer to feasibility. For many event orders, no feasible schedule exists. Scoring the `degree of infeasibility', helps us to get from an infeasible order to a feasible one.


\subsection{Finding a good event order: local search}
It remains to find good event orders to feed to the LP. For this, we use a local search algorithm: simulated annealing. Below, we will describe the core of the local search, and the neighborhood operators that are used. 

We start our local search with the event order that results from the greedy algorithm (see below). Every iteration, we modify our current solution using one of the neighborhood operators. Improvements are always accepted, while solutions that are worse than the current one are accepted with a probability determined by the current value of the temperature $T$. The precise settings for the simulated annealing will be discussed in Section \ref{sec:results}.

If a candidate solution is rejected, we keep trying with the same neighborhood operator. We do this by generating a random permutation of the event list each time we start with a neighborhood operator, and apply the operator to the events in that order, until a candidate solution is accepted. If this is unsuccessful we move on to the next neighborhood operator. If no solution is accepted after looping through all three neighborhood operators, we terminate the search and report the best seen solution up to that point.

We have three neighborhood operators that we use to modify the event order and obtain new candidate solutions:
\begin{itemize}
	\item \textbf{Swap two adjacent events}. We take the next event from the random permutation, at position $e \in \{1, ...,2n-1\}$ and its successor at position $e + 1$ and reverse their order. This means that the event originally at position $e$ now is at position $e + 1$ and vice versa.
	\item \textbf{Move a single event}. We take the next event from the random permutation, at position $e \in \{1, ...,2n\}$. We then determine its range of movement. On both sides, it is limited by the first event we encounter with which the event has a precedence relation. How we determine these precedence relations is discussed in Section \ref{sec:init-sol}. From this range, we then select a new position, where the probability of a position being selected is inversely proportional to its distance to the original position of the event. That is, smaller displacements are more likely to be selected than larger ones. The event is then moved to its new position, with the other events in between the current and former position of the event shifted accordingly.
	\item \textbf{Move both events associated with a single job}. We take the next job from the random permutation, job $J_j$ $j \in \{1, ...,n\}$. We then look at both of its associated events, and determine their combined range of movement. As before, we determine the number of positions both can move in either direction without crossing over an event with which they have a precedence relation (see Section \ref{sec:init-sol}). We then take the minimum of these values to get a combined range. From this range, we select a new position with uniform probability. The events are then moved to their new position, both offset from their original position by the same amount, with the other events in between the current and former position of the events shifted accordingly.
\end{itemize}
\vspace*{1em}
Each of these neighborhood operators results in a small modification of the event order, necessitating an update of the LP for evaluation. We modify only those parts of the LP that are affected, and resolve the model to obtain the score of the new candidate solution, starting from the current solution.

\subsection{Preprocessing and initial solution: greedy algorithm}\label{sec:init-sol}
We obtain an initial event order using a greedy algorithm. The algorithm assigns resource to jobs that have been released and have not been fully served, and extracts the order of events from the resulting schedule. In the construction of the initial order, we ignore lower bounds on resource consumption and do not consider the cost function at all. Keep in mind that we are only interested in the order of events at this point, not in the schedule itself.

We take the release times $r_j$ and deadlines $\bar{d}_j$ of all jobs, and sort them in ascending order. In this way, we obtain a sequence of time periods during which the set of jobs available for processing remains the same.

We loop over these time periods, keeping track of the list of available jobs during the period. Then, we decide the assignment of resources during each time period in the following way:
\begin{enumerate}
	\item We assign all available jobs their minimal amount of resource required. If this exceeds the total amount of resource available, we stop after this step and continue to the next time period;
	\item In order of increasing deadline, we assign as much resource to each job as it can consume. This may be bounded by the upper bound on resource consumption of the job, or by the amount of resource that is left to be assigned at this point. We continue until the resource in the time period is fully used or no other jobs are available.
\end{enumerate}
The minimal amount of resource a job has to consume is given by $\max\{0, \tilde{E}_j - (\bar{d}_j - t_{end})P^+_j\}$, while its maximum is given by $\min\{\tilde{E}_j, P^+_j(t_{end}-t_{start})\}$. Here $t_{start}$, $t_{end}$ are the start and end of the current time period and $\tilde{E}_j$ is the \emph{remaining} resource requirement of the job.

At any time during this procedure:
\begin{itemize}
	\item If a job is assigned resources for the first time, we append the start event for that job to our current list;
	\item If a job's full resource requirement is satisfied, we append the completion event for that job to our current list.
\end{itemize}

At the end of the procedure we have a complete order of events, which will be the starting solution for our local search. We determine the quality of the optimal schedule for this initial event order by solving the LP above. This schedule is likely to assign resources differently from the greedy algorithm we used to find the event order in the first place. Note that the schedule may have positive penalty terms, i.e. it does not have to meet the available resource bound $P$ or the job lower or upper bounds $P_j^-$ and $P_j^+$.\\

To complete the description of our approach, we will discuss the preprocessing steps that we use in our algorithm. We deduce a number of implicit precedence constraints that we use in the neighborhood operators:
\begin{enumerate}
	\item A start event of a job $S_j$ always has to occur before its corresponding completion event $C_j$;
	\item Let $u_j = E_j / P^+_j$ be the minimal processing time required to complete a job. Then the possible time ranges of a start and completion event are $[r_j, \bar{d}_j - u_j]$ and $[r_j + u_j, \bar{d}_j]$, respectively. For any two events, if the last possible time within the range of event $i$ lies before the first possible time within the range of event $i'$, we know that $i$ has to come before $i'$ in any event order.
\end{enumerate}

\section{Test instances}\label{sec:instances}
We generated a number of instances to evaluate our approach. We consider three important characteristics to group these instances: the number of jobs ($n$), the amount of resource ($P$) and whether an instance is  \textit{adversarial} ($a$).

An instance is said to be adversarial, when the ordering of weights is such that the weight of jobs monotonically increases with increasing deadline. This is meant as a counter to the greedy algorithm that determines a starting solution, as this assigns resources to jobs in order of increasing deadline.

By varying $P$, we also affect the ratios of $P/P^-_j$, $P/P^+_j$ and $P/E_j$, as the sampling of values for $E_j$, $P^+_j$ and $P^-_j$ is (largely) unaffected by the chosen values of $n$, $P$ and $a$. Each of these ratios are more interesting properties of an instance than $P$ in itself.

Table \ref{tab:instance-overview} lists the values of $n$, $P$ and $a$ for which instances were generated. Each combination of $n$, $P$ and $a$ was used to generate four unique instances.

Below we describe the procedure for generating instances.

\begin{table}
	\begin{center}
		\caption{Overview of values of $n$, $P$ and $a$ used for generating instances}
		\label{tab:instance-overview}
	\begin{tabular}{@{}ll@{}}
		\toprule
		$n$ & 5, 10, 15, 20, 30, 50 \\
		$P$ & 25.00, 50.00, 100.00, 200.00 \\
		$a$ & True, False \\
		\botrule
	\end{tabular}
	\end{center}
\end{table}

\subsection{Instance generation method}
The instance generation is based on the three parameters discussed above ($n$, $P$ and $a$) as well as four scaling parameters: $a_{\mathrm{maxlow}}$, $a_{\mathrm{minupp}}$, $a_{\mathrm{rshift}}$ and $a_{\mathrm{pws}}$. For each job we sample seven values, that together fully determine the six properties of a job as described in Section \ref{sec:prob-desc}.

\begin{itemize}
	\item A resource requirement $E_j$ from $U(10, 100)$.
	\item A lower bound $P^-_j$ from $U(0, \min\{a_{\mathrm{maxlow}}P, a_{\mathrm{minupp}}E_j\})$. The parameters $a_{\mathrm{maxlow}}$ and $a_{\mathrm{minupp}}$ can be used to control the largest possible lower bound. Here $a_{\mathrm{maxlow}}$ makes sure that the lower bound on the resource consumption per time unit never exceeds a fraction ($a_{\mathrm{maxlow}}$) of what is available during a unit of time, leading to instances where the lower bounds do not restrict the schedule to only processing a single job at a time. Then, $a_{\mathrm{minupp}}$ has a similar purpose, but as a fraction of the total resource requirement of a job, ensuring that the lower bound does not restrict the possible duration of a job too much.
	\item An upper bound $P^+_j$ from $U(a_{\mathrm{minupp}}E_j, E_j)$. The $a_{\mathrm{minupp}}$ parameter can be used to make the upper bound more likely to be more restrictive, while also ensuring it does not drop below the lower bound. This distribution also ensures that no job can be completed in less than a single time unit.
	\item A release time $r_j$ from $U\left(-a_{\mathrm{rshift}} \frac{\sum_j E_j}{P}, (1-a_{\mathrm{rshift}})\frac{\sum_j E_j}{P}\right)$. Any negative values are treated as zero. The parameter $a_{\mathrm{rshift}}$ shifts the window for generating release times, such that a fraction of the generated release times equal to $a_{\mathrm{rshift}}$ is expected to be a negative number. In this way, one can control the expected number of jobs that are available for processing immediately at time 0. The largest possible release time is limited by the expected total processing time (at full utilization).
	\item a deadline $\bar{d}_j$: $r_j+ U\left(\frac{E_j}{\min\{P, P^+_j\}}, a_{\mathrm{pws}}\frac{\sum_j E_j}{P}\right)$. The parameter $a_{\mathrm{pws}}$ sets the latest possible deadline in terms of the expected total processing time (at full utilization). The earliest possible deadline is the release time plus the minimum required processing time (its requirement divided by the minimum of its upper bound or the amount of resource available per unit of time), avoiding trivially infeasible instances.
	\item A weight $w_j$ from $U(0, 5)$ and an optional objective constant $B_j$ from $U(0, 10)$. Together they determine the cost function of job $J_j$: $w_j \cdot C_j + B_j$.
\end{itemize}

All numbers are rounded to two decimal places.

This instance generation method 
does not entirely prevent that there might be a shortage of available resource during a certain period of time, depending on the sampled release times and deadlines. The odds of this happening can be reduced by increasing the range of release times and/or processing windows, using $a_{\mathrm{pws}}$ for example. We describe a method that gives a strong indication of the feasibility of in instance in the following section. In Section \ref{sec:param-instgen} we describe our choice of parameters that lead to mostly feasible instances.

\subsection{Checking feasibility}\label{sec:flow}
To get an indication of the (in)feasibility of generated instances, we use a flow-inspired LP that solves the feasibility problem for the instance, ignoring lower bounds ($P^-_j$).

The lower bounds might still render an instance infeasible, even if the LP indicates that a feasible flow exists. However, it provides a good and quick indication.

\begin{figure*}
	\centering
	\includegraphics{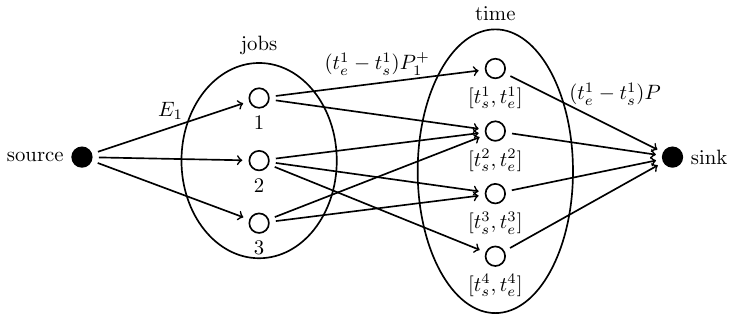}
	\caption{Example with three jobs and four intervals}
	\label{fig:bipartite-graph-example}
\end{figure*}
We will formulate the problem as a max flow problem. For this, we construct a bipartite graph, with one node for each job on one side, and one node for each time interval on the other. To obtain relevant intervals, we sort the set of all release times and deadlines and get an interval for every two consecutive distinct values. This results in at most $2n - 1$ intervals.

There is an arc between a job node ($j$) and a time interval node ($t$), if the job can be processed during the interval $t = [t_s, t_e)$ (i.e. $r_j \leq t_s$ and $t_e \leq \bar{d}_j$). The capacity of that arc is equal to $(t_e - t_s)P_j^+$, the maximum amount of power that job $j$ can receive during that interval.

We add a source that has an arc going to every job node, with $E_j$ (the total amount of work for that job) as its capacity, and a sink with an incoming arc from each time interval node with a capacity of $(t_e - t_s)P$ (the available power in that interval).

A flow through this graph now will represent a power assignment to the jobs. We use an LP analogue to solve this flow problem. If a max flow of $\sum_j E_j$ is found, we know the problem without lower bounds is feasible. If the max flow has a smaller value, we know that this instance has no feasible solution.

An example with three jobs and four time intervals is illustrated in Figure \ref{fig:bipartite-graph-example}.

\subsection{Choice of parameters for instance generation}\label{sec:param-instgen}
We performed a grid search on a number of possible combinations of parameter settings, generating 100 random instances for each setting and using the flow-inspired LP described above to check the feasibility.

This analysis was performed for different values of $n$, while $P$ was fixed at $50.00$. A parameter setting was selected for each value of $n$ such that for 99\% of generated instances a feasible flow exists in the relaxed problem (without lower bounds). The aim was to end up with mostly feasible instances, with the occasional infeasible instance mixed in.

Note, however, that this analysis was performed for $P=50.00$ only. While the same parameter settings were used for other values of $P$, the instances generated are not necessarily equally likely to be feasible. We observe in practice, however, that the likelihood of feasibility is very similar.

The parameter settings used for each value of $n$ are listed in Table \ref{tab:parameter-settings-instance-generation}.

\begin{table}
	\begin{center}
		\caption{Parameter settings used for generating instances of different sizes}
		\label{tab:parameter-settings-instance-generation}
	\begin{tabular}{@{}lrrrr@{}}
		\toprule
		 & $a_{\mathrm{maxlow}}$ & $a_{\mathrm{minupp}}$ &
		$a_{\mathrm{rshift}}$ & $a_{\mathrm{pws}}$ \\
		\midrule
		$n=5, 10$ & 0.25 & 0.25 & 0.125 & 2 \\
		$n=15, 20, 30, 50$ & 0.25 & 0.25 & 0.125 & 1.5 \\
		\botrule
	\end{tabular}
	\end{center}
\end{table}

\section{Computational results}\label{sec:results}
We ran computational experiments, measuring both solution time and the objective value of the best found solution for the MILP formulation and the hybrid local search algorithm. The instances we generated and used to run our tests can be found here: \url{https://github.com/RoelBrouwer/2022-cecsp-data/releases/tag/jos} and the code used to run the experiments is available here: \url{https://github.com/RoelBrouwer/continuousresource/releases/tag/jos}.

\subsection{Parameter settings}\label{sec:param-settings}
To run our simulated annealing algorithm, we need to determine the value for the following parameters:
\begin{itemize}
	\item Initial temperature $T_{init}$;
	\item Number of iterations between temperature updates $\alpha_{period}$;
	\item Multiplication factor for updating the temperature $\alpha$;
	\item Penalty for using slack variables $L^B, L^R$;
	\item Probability of selecting each neighborhood operator:
	\begin{itemize}
		\item Swap $p_s$;
		\item Single move $p_m$;
		\item Paired move $p_p$.
	\end{itemize}
\end{itemize}

For smaller values of $n$, we ran a (limited) grid search for a number of combinations of parameter values, selecting values that performed well in these instances, and extrapolated this to larger instance sizes. The selected values are presented in Table \ref{tab:sa-params}.

In addition, since we consider the hybrid local search to be most relevant for the instances with $n=50$, we allow restarts under certain conditions for these larger instances. If a run completes before 1800 seconds have passed, we perform 100 random swaps on the current event order, and restart the simulated annealing with the obtained order as the initial solution.

\begin{table}
	\begin{center}
	\caption{Parameter settings for simulated annealing used for computational experiments}
	\label{tab:sa-params}
	\begin{tabular}{@{}llllllll@{}}
		\toprule
		$T_{init}$ & $\alpha_{period}$ & $\alpha$ & $L^B$ & $L^R$ & $p_s$ & $p_m$ & $p_p$ \\
		\midrule
		$n$ & $(2n - 1) \cdot 4$ & 0.95 & 5 & 5 & 0.75 & 0.15 & 0.1 \\
		\botrule
	\end{tabular}
	\end{center}
\end{table}
\vspace*{1em}
For solving both the MILP formulation and the LP subproblem of the simulated annealing algorithm we used CPLEX Studio 22.1, with a time limit of 3600 seconds, and limited to the use of a single thread. Everything else has been implemented in the Python programming language. The processor of the system used to run the tests on was an Intel(R) Xeon(R) Gold 6130 CPU @ 2.10GHz.

\begin{table}
	\begin{center}
		\caption{Comparison of MILP formulation and simulated annealing algorithm on small instances}
		\label{tab:sa-mip-comparison}
		\begin{tabular}{@{}lrrrr@{}}
			\toprule
			& \multicolumn{2}{c}{MILP} & \multicolumn{2}{c}{SA} \\
			$n$ & Avg. time & Avg. diff & Avg. time & Avg. diff \\
			\midrule
			5 & 2.23 & 0.00 & 85.63 & 0.00 \\
			10 & 2300.45 & 0.00 & 280.18 & 0.00 \\
			15 & 3521.22 & 0.03 & 532.74 & 0.00 \\
			\botrule
		\end{tabular}
	\end{center}
\end{table}

\subsection{Results}
The full results of these runs can be found in Tables \ref{tab:appendix-results-5} - \ref{tab:appendix-results-50}.

In these result tables, the first four columns uniquely describe the instance: the number of jobs $n$, resource availability $P$, whether the weights are sorted in ascending order (adversarial). As four instances of each type were generated, the fourth column identifies each instance with a number 0-3. We then list whether our feasibility test was passed by the instance or not (as described in Section \ref{sec:flow}).

This is followed by a column indicating the best objective value that we have encountered in any run of any of our algorithms. Note that this include some extremely long runs done in the trail phase. Next we list the results of the MILP and hybrid local search algorithm (SA). For each we present the run time and best found objective value, while we also add the objective value of the initial solution (found by the greedy algorithm described in Section \ref{sec:init-sol}) for our hybrid local search approach.

Any cursive values indicate objective values corresponding to infeasible solutions. In the case of a solution found by our hybrid local search, this means that some slack variables have a non-zero value. An objective value of a solution found by the MILP or hybrid local search is put in boldface if it is equal to the best known objective value. Finally, we put \texttt{LIMIT} in the \textit{time} column under \textit{MILP} if the solver did not terminate within 3600 seconds, meaning it was unable to prove optimality of the current best found solution.

Note that the results for the hybrid local search in these tables are of a single run of each of the algorithms, they are not averages or best-of results.

In Table \ref{tab:sa-mip-comparison} we compare the performance of the MILP formulation and the simulated annealing on small instances ($n=5, 10,15$) in terms of run time and solution quality. This analysis is based on the results in Tables \ref{tab:appendix-results-5} - \ref{tab:appendix-results-15}. Infeasible instances were left out of the analysis. The average runtime and difference are computed only over those instances where both approaches come up with a feasible solution. Note that the average runtimes of the MILP for $n=10,15$ are somewhat misleading, as many runs have been cut off at 3600 seconds, resulting in a runtime equal to the cut-off time for these runs. The difference is defined to be the relative difference between the score of the best known solution and the reported solution. 

It is relevant to point out that for both $n=10$ and $n=15$ there are two instances where the MILP is able to find a feasible solution, and the simulated annealing is not, while for $n=15$ the simulated annealing finds a feasible solution for five instances where the MILP fails.

We can see that the runtime of the hybrid local search approach scales better than that of the MILP. Our approach is competitive with the MILP on smaller instances: it finds equally good solutions, only taking more time for the smallest instances. On larger instances, the hybrid local search finds better solutions than the MILP in less time, if the MILP is able to find any feasible solution at all. The difference is likely to become even clearer if we would perform a number of reruns of the local search and only keep the best result, as we already did for the larger instances ($n=50$). For $n=15$, for example, about 6-7 reruns can be performed without exceeding the time limit of 3600 that the MILP also has to respect.

We do see, however, that the larger instances are much more challenging. In Table \ref{tab:sa-mip-feasible}, we show the number of instances for which  we summarize the number of instances for which the (flow) feasibility test predicted a feasible solution exists, and present the number of times a feasible solution and the best known solution were found by our hybrid approach and the MILP formulation. This analysis is based on the results in Tables \ref{tab:appendix-results-5} - \ref{tab:appendix-results-50}. By allowing restarts, we are able to find feasible solutions in the majority of cases, even for $n=50$, but tackling larger instance sizes remains a challenge.

Note that the best found solution for $n=30$ is always the same as the reported solution for the hybrid local search algorithm. We added the instances of this size at a later stage to gain better insight, as the sizes of $n=20$ and $n=50$ differ strongly in their results. We did not do trail runs for these instances, and the reported solutions are therefore by default always the best known. For the other large instances ($n=20,50$), we observe that the number of times that our approach finds the best known solution strongly decreases. To put this in perspective, we note that the best known solution for these larger instances was often found during a trail run that took many hours. These do not offer a realistic alternative to the presented approach. 

The initial solution that the greedy algorithm from Section \ref{sec:init-sol} finds, provides a good starting point for our local search. For $n=15$ it occasionally even comes up with a better solution than the MILP finds in an hour. In addition to that, the local search strongly improves the initial solution for all instance sizes.

\section{Conclusion and future work}\label{sec:conclusion}
Our hybrid local search approach matches the MILP formulation in solution quality for small instances, and is able to find a feasible solution for larger instances in reasonable time. We apply a decomposition of the problem in two parts, where a local search algorithm is used to find event orders and an LP is used to find an optimal schedule for a given event order. This approach opens the door for finding solutions to larger instances. With further tuning and refinement of the current approach it is promising for finding solutions to instances of larger sizes.

One of the avenues for future work would be exactly this, further tuning the approach to deal with larger instances. The addition of more preprocessing steps (e.g. identifying more implicit precedences) and the experimentation with different search strategies (e.g. adding some form of random restart) are promising directions for further investigation. The MILP formulation can be further strengthened by adding valid inequalities.

Finally, one could look at further extensions of the problem. Among those of interest are the addition of precedence relations and changing $P$ from a constant into a variable resource availability function $P(t)$.

\begin{table}
	\begin{center}
		\caption{Number of feasible solutions found for each instance size}
		\label{tab:sa-mip-feasible}
		\begin{tabular}{@{}lrrrrr@{}}
			\toprule
			& Flow & \multicolumn{2}{c}{SA} & \multicolumn{2}{c}{MIP} \\
			$n$ & Feas. & Feas. & Best & Feas. & Best \\
			\midrule
			5 & 28 & 28 & 28 & 28 & 28 \\
			10 & 32 & 30 & 26 & 32 & 23 \\
			15 & 32 & 30 & 15 & 27 & 4 \\
			20 & 31 & 30 & 8 & - & - \\
			30 & 32 & 28 & 32 & - & - \\
			50 & 32 & 21 & 9 & - & - \\
			\botrule
		\end{tabular}
	\end{center}
\end{table}

\begin{table*}
	\begin{center}
		\caption{Computational results for $n=5$}
		\label{tab:appendix-results-5}
		\begin{tabular}{@{}llll|l|l|rr|rrr@{}}
			\toprule
			\multicolumn{4}{c|}{} & \multicolumn{1}{c|}{Flow}& \multicolumn{1}{c|}{Best}& \multicolumn{2}{c|}{MILP}& \multicolumn{3}{c}{SA} \\
			$n$ & $P$ & adv.? & \# & Feas.? & known & time & obj & time & obj & init \\
			\midrule
			5 & 25 & 0 & 0 & yes & 163.58 & 2.04 & \textbf{163.58} & 24.18 & \textbf{163.58} & 177.77\\
			5 & 25 & 0 & 1 & yes & 165.72 & 1.58 & \textbf{165.72} & 189.19 & \textbf{165.72} & 212.33\\
			5 & 25 & 0 & 2 & yes & 99.42 & 18.87 & \textbf{99.42} & 59.68 & \textbf{99.42} & 104.79\\
			5 & 25 & 0 & 3 & yes & 78.70 & 1.92 & \textbf{78.70} & 80.97 & \textbf{78.70} & 97.96\\
			5 & 25 & 1 & 0 & yes & 113.21 & 1.96 & \textbf{113.21} & 104.72 & \textbf{113.21} & 132.90\\
			5 & 25 & 1 & 1 & yes & 61.94 & 6.30 & \textbf{61.94} & 91.03 & \textbf{61.94} & 68.71\\
			5 & 25 & 1 & 2 & yes & 73.06 & 1.83 & \textbf{73.06} & 111.09 & \textbf{73.06} & 88.02\\
			5 & 25 & 1 & 3 & yes & 96.81 & 6.36 & \textbf{96.81} & 95.73 & \textbf{96.81} & 98.34\\
			5 & 50 & 0 & 0 & yes & 72.39 & 1.56 & \textbf{72.39} & 75.05 & \textbf{72.39} & 76.19\\
			5 & 50 & 0 & 1 & yes & 88.61 & 0.87 & \textbf{88.61} & 103.08 & \textbf{88.61} & 94.87\\
			5 & 50 & 0 & 2 & yes & 95.38 & 1.13 & \textbf{95.38} & 100.95 & \textbf{95.38} & 101.10\\
			5 & 50 & 0 & 3 & yes & 80.81 & 1.38 & \textbf{80.81} & 130.12 & \textbf{80.81} & 92.27\\
			5 & 50 & 1 & 0 & yes & 98.25 & 4.23 & \textbf{98.25} & 121.34 & \textbf{98.25} & 112.77\\
			5 & 50 & 1 & 1 & yes & 74.93 & 2.24 & \textbf{74.93} & 101.55 & \textbf{74.93} & 82.42\\
			5 & 50 & 1 & 2 & yes & 83.88 & 0.94 & \textbf{83.88} & 92.08 & \textbf{83.88} & 99.66\\
			5 & 50 & 1 & 3 & yes & 102.10 & 2.08 & \textbf{102.10} & 91.60 & \textbf{102.10} & 102.95\\
			5 & 100 & 0 & 0 & yes & 75.25 & 0.66 & \textbf{75.25} & 91.27 & \textbf{75.25} & 91.20\\
			5 & 100 & 0 & 1 & yes & 77.71 & 0.51 & \textbf{77.71} & 36.22 & \textbf{77.71} & 175.64\\
			5 & 100 & 0 & 2 & yes & 49.32 & 0.95 & \textbf{49.32} & 62.31 & \textbf{49.32} & 56.07\\
			5 & 100 & 0 & 3 & yes & 51.97 & 0.54 & \textbf{51.97} & 64.66 & \textbf{51.97} & 111.72\\
			5 & 100 & 1 & 0 & yes & 53.80 & 0.45 & \textbf{53.80} & 123.77 & \textbf{53.80} & 54.48\\
			5 & 100 & 1 & 1 & yes & 69.92 & 0.71 & \textbf{69.92} & 43.58 & \textbf{69.92} & 76.33\\
			5 & 100 & 1 & 2 & yes & 93.13 & 0.92 & \textbf{93.13} & 42.37 & \textbf{93.13} & 100.24\\
			5 & 100 & 1 & 3 & yes & 53.79 & 0.64 & \textbf{53.79} & 39.44 & \textbf{53.79} & 55.46\\
			5 & 200 & 0 & 0 & yes & 67.13 & 0.38 & \textbf{67.13} & 89.15 & \textbf{67.13} & 137.92\\
			5 & 200 & 0 & 1 & \textit{no} & \textit{120.94} & 0.43 & \textit{-1.00} & 0.24 & \textit{169.91} & 169.91\\
			5 & 200 & 0 & 2 & \textit{no} & \textit{74.41} & 0.45 & \textit{-1.00} & 1.78 & \textit{74.44} & 146.13\\
			5 & 200 & 0 & 3 & yes & 57.02 & 0.48 & \textbf{57.02} & 86.05 & \textbf{57.02} & 59.86\\
			5 & 200 & 1 & 0 & yes & 56.35 & 0.39 & \textbf{56.35} & 67.70 & \textbf{56.35} & 57.73\\
			5 & 200 & 1 & 1 & \textit{no} & \textit{81.35} & 0.45 & \textit{-1.00} & 1.62 & \textit{81.61} & 148.08\\
			5 & 200 & 1 & 2 & yes & 67.19 & 0.61 & \textbf{67.19} & 78.81 & \textbf{67.19} & 122.79\\
			5 & 200 & 1 & 3 & \textit{no} & \textit{229.58} & 0.41 & \textit{-1.00} & 1.16 & \textit{231.03} & 330.74\\
			\botrule
		\end{tabular}
	\end{center}
\end{table*}

\begin{table*}
	\begin{center}
		\caption{Computational results for $n=10$}
		\label{tab:appendix-results-10}
		\begin{tabular}{@{}llll|l|l|rr|rrr@{}}
			\toprule
			\multicolumn{4}{c|}{} & \multicolumn{1}{c|}{Flow}& \multicolumn{1}{c|}{Best}& \multicolumn{2}{c|}{MILP}& \multicolumn{3}{c}{SA} \\
			$n$ & $P$ & adv.? & \# & Feas.? & known & time & obj & time & obj & init \\
			\midrule
			10 & 25 & 0 & 0 & yes & 359.47 & \texttt{LIMIT} & 360.49 & 193.25 & \textbf{359.47} & 424.51\\
			10 & 25 & 0 & 1 & yes & \textit{334.63} & \texttt{LIMIT} & 337.72 & 206.95 & \textit{334.63} & 424.35\\
			10 & 25 & 0 & 2 & yes & 225.99 & \texttt{LIMIT} & 226.76 & 204.47 & 232.05 & 250.28\\
			10 & 25 & 0 & 3 & yes & 300.08 & \texttt{LIMIT} & \textbf{300.08} & 243.97 & \textbf{300.08} & 392.84\\
			10 & 25 & 1 & 0 & yes & 345.71 & 525.62 & \textbf{345.71} & 277.95 & \textbf{345.71} & 401.68\\
			10 & 25 & 1 & 1 & yes & 394.69 & 3228.46 & \textbf{394.69} & 218.89 & \textbf{394.69} & 446.65\\
			10 & 25 & 1 & 2 & yes & 399.06 & \texttt{LIMIT} & \textbf{399.06} & 245.50 & \textbf{399.06} & 493.58\\
			10 & 25 & 1 & 3 & yes & 378.15 & \texttt{LIMIT} & \textbf{378.15} & 280.72 & \textbf{378.15} & 400.00\\
			10 & 50 & 0 & 0 & yes & 193.57 & \texttt{LIMIT} & \textbf{193.57} & 293.98 & \textbf{193.57} & 207.17\\
			10 & 50 & 0 & 1 & yes & 220.46 & \texttt{LIMIT} & 221.18 & 283.44 & 220.87 & 239.74\\
			10 & 50 & 0 & 2 & yes & 254.38 & 1131.58 & \textbf{254.38} & 332.78 & \textbf{254.38} & 289.27\\
			10 & 50 & 0 & 3 & yes & 191.05 & \texttt{LIMIT} & \textbf{191.05} & 248.19 & \textbf{191.05} & 213.67\\
			10 & 50 & 1 & 0 & yes & 162.90 & \texttt{LIMIT} & \textbf{162.90} & 281.60 & \textbf{162.90} & 179.64\\
			10 & 50 & 1 & 1 & yes & 171.90 & \texttt{LIMIT} & 172.04 & 298.19 & \textbf{171.90} & 174.68\\
			10 & 50 & 1 & 2 & yes & 194.43 & \texttt{LIMIT} & 194.56 & 1.17 & \textit{552.53} & 595.79\\
			10 & 50 & 1 & 3 & yes & 289.80 & \texttt{LIMIT} & \textbf{289.80} & 256.29 & 303.90 & 388.97\\
			10 & 100 & 0 & 0 & yes & 155.15 & 29.48 & \textbf{155.15} & 312.78 & \textbf{155.15} & 162.38\\
			10 & 100 & 0 & 1 & yes & 139.62 & \texttt{LIMIT} & \textbf{139.62} & 308.31 & \textbf{139.62} & 148.31\\
			10 & 100 & 0 & 2 & yes & 203.37 & \texttt{LIMIT} & \textbf{203.37} & 284.11 & \textbf{203.37} & 211.90\\
			10 & 100 & 0 & 3 & yes & 168.99 & \texttt{LIMIT} & 171.55 & 306.39 & \textbf{168.99} & 194.37\\
			10 & 100 & 1 & 0 & yes & 205.44 & \texttt{LIMIT} & \textbf{205.44} & 348.42 & \textbf{205.44} & 219.86\\
			10 & 100 & 1 & 1 & yes & 171.26 & \texttt{LIMIT} & 171.42 & 393.40 & \textbf{171.26} & 225.93\\
			10 & 100 & 1 & 2 & yes & 143.83 & \texttt{LIMIT} & \textbf{143.83} & 253.98 & \textbf{143.83} & 163.53\\
			10 & 100 & 1 & 3 & yes & 145.66 & \texttt{LIMIT} & 148.32 & 282.35 & 148.32 & 162.14\\
			10 & 200 & 0 & 0 & yes & 125.38 & 181.10 & \textbf{125.38} & 221.56 & \textbf{125.38} & 130.17\\
			10 & 200 & 0 & 1 & yes & 138.32 & 4.58 & \textbf{138.32} & 309.86 & \textbf{138.32} & 185.09\\
			10 & 200 & 0 & 2 & yes & 107.74 & 33.39 & \textbf{107.74} & 276.53 & \textbf{107.74} & 113.15\\
			10 & 200 & 0 & 3 & yes & 100.16 & 4.42 & \textbf{100.16} & 314.35 & \textbf{100.16} & 104.39\\
			10 & 200 & 1 & 0 & yes & 158.68 & 18.55 & \textbf{158.68} & 307.19 & \textbf{158.68} & 182.99\\
			10 & 200 & 1 & 1 & yes & 135.62 & 13.38 & \textbf{135.62} & 443.79 & \textbf{135.62} & 136.47\\
			10 & 200 & 1 & 2 & yes & 108.72 & 7.00 & \textbf{108.72} & 193.36 & \textbf{108.72} & 109.42\\
			10 & 200 & 1 & 3 & yes & 101.19 & \texttt{LIMIT} & \textbf{101.19} & 233.21 & \textbf{101.19} & 106.54\\
			\botrule
		\end{tabular}
	\end{center}
\end{table*}

\begin{table*}
	\begin{center}
		\caption{Computational results for $n=15$}
		\label{tab:appendix-results-15}
		\begin{tabular}{@{}llll|l|l|rr|rrr@{}}
			\toprule
			\multicolumn{4}{c|}{} & \multicolumn{1}{c|}{Flow}& \multicolumn{1}{c|}{Best}& \multicolumn{2}{c|}{MILP}& \multicolumn{3}{c}{SA} \\
			$n$ & $P$ & adv.? & \# & Feas.? & known & time & obj & time & obj & init \\
			\midrule
			15 & 25 & 0 & 0 & yes & 831.86 & \texttt{LIMIT} & 897.55 & 538.08 & \textbf{831.86} & 871.10\\
			15 & 25 & 0 & 1 & yes & 587.90 & \texttt{LIMIT} & \textit{-1.00} & 390.93 & 587.96 & 752.17\\
			15 & 25 & 0 & 2 & yes & 779.20 & \texttt{LIMIT} & 790.29 & 399.92 & \textbf{779.20} & 985.93\\
			15 & 25 & 0 & 3 & yes & 800.74 & \texttt{LIMIT} & 825.81 & 417.82 & \textbf{800.74} & 997.23\\
			15 & 25 & 1 & 0 & yes & 733.29 & \texttt{LIMIT} & 770.22 & 326.79 & \textbf{733.29} & 882.03\\
			15 & 25 & 1 & 1 & yes & 805.43 & \texttt{LIMIT} & 898.95 & 536.96 & 811.87 & 953.44\\
			15 & 25 & 1 & 2 & yes & 649.83 & \texttt{LIMIT} & \textit{-1.00} & 563.08 & \textbf{649.83} & 847.58\\
			15 & 25 & 1 & 3 & yes & 561.05 & \texttt{LIMIT} & 572.22 & 402.05 & 561.98 & 662.60\\
			15 & 50 & 0 & 0 & yes & 542.53 & \texttt{LIMIT} & \textit{-1.00} & 448.71 & 542.57 & 634.80\\
			15 & 50 & 0 & 1 & yes & 309.19 & \texttt{LIMIT} & 311.46 & 544.40 & \textbf{309.19} & 363.38\\
			15 & 50 & 0 & 2 & yes & 353.07 & \texttt{LIMIT} & 358.26 & 411.76 & 353.08 & 382.82\\
			15 & 50 & 0 & 3 & yes & 391.34 & \texttt{LIMIT} & \textit{-1.00} & 462.41 & 391.48 & 611.09\\
			15 & 50 & 1 & 0 & yes & 322.26 & \texttt{LIMIT} & 325.22 & 520.40 & \textbf{322.26} & 349.78\\
			15 & 50 & 1 & 1 & yes & 475.77 & \texttt{LIMIT} & 492.32 & 2.18 & \textit{793.14} & 793.14\\
			15 & 50 & 1 & 2 & yes & 589.86 & \texttt{LIMIT} & \textit{-1.00} & 436.84 & \textbf{589.86} & 626.66\\
			15 & 50 & 1 & 3 & yes & 347.26 & \texttt{LIMIT} & 349.15 & 392.01 & \textbf{347.26} & 399.45\\
			15 & 100 & 0 & 0 & yes & 235.75 & \texttt{LIMIT} & 246.03 & 427.83 & 239.26 & 250.06\\
			15 & 100 & 0 & 1 & yes & 330.64 & \texttt{LIMIT} & 337.54 & 507.91 & 331.21 & 338.12\\
			15 & 100 & 0 & 2 & yes & 314.33 & \texttt{LIMIT} & 321.52 & 643.93 & \textbf{314.33} & 348.71\\
			15 & 100 & 0 & 3 & yes & 288.97 & \texttt{LIMIT} & 289.10 & 479.21 & \textbf{288.97} & 342.44\\
			15 & 100 & 1 & 0 & yes & 279.48 & \texttt{LIMIT} & 291.05 & 542.24 & 279.58 & 299.08\\
			15 & 100 & 1 & 1 & yes & 341.46 & \texttt{LIMIT} & 345.56 & 671.86 & \textbf{341.46} & 408.68\\
			15 & 100 & 1 & 2 & yes & 215.79 & \texttt{LIMIT} & 230.71 & 2.30 & \textit{520.72} & 520.72\\
			15 & 100 & 1 & 3 & yes & 259.86 & \texttt{LIMIT} & 265.08 & 546.11 & \textbf{259.86} & 296.06\\
			15 & 200 & 0 & 0 & yes & 168.23 & \texttt{LIMIT} & 190.76 & 579.06 & 168.24 & 181.16\\
			15 & 200 & 0 & 1 & yes & 253.88 & \texttt{LIMIT} & 261.80 & 881.63 & \textbf{253.88} & 259.09\\
			15 & 200 & 0 & 2 & yes & 143.38 & \texttt{LIMIT} & 144.71 & 702.94 & 143.51 & 151.02\\
			15 & 200 & 0 & 3 & yes & 237.71 & 1496.19 & \textbf{237.71} & 549.86 & \textbf{237.71} & 243.81\\
			15 & 200 & 1 & 0 & yes & 179.79 & \texttt{LIMIT} & 179.86 & 646.97 & 179.85 & 199.15\\
			15 & 200 & 1 & 1 & yes & 198.08 & \texttt{LIMIT} & \textbf{198.08} & 526.01 & 198.10 & 207.30\\
			15 & 200 & 1 & 2 & yes & 160.21 & \texttt{LIMIT} & \textbf{160.21} & 565.79 & 160.34 & 165.41\\
			15 & 200 & 1 & 3 & yes & 244.38 & \texttt{LIMIT} & \textbf{244.38} & 556.86 & 244.41 & 254.76\\
			\botrule
		\end{tabular}
	\end{center}
\end{table*}

\begin{table*}
	\begin{center}
		\caption{Computational results for $n=20$}
		\label{tab:appendix-results-20}
		\begin{tabular}{@{}llll|l|l|rrr@{}}
			\toprule
			\multicolumn{4}{c|}{} & \multicolumn{1}{c|}{Flow}& \multicolumn{1}{c|}{Best}& \multicolumn{3}{c}{SA} \\
			$n$ & $P$ & adv.? & \# & Feas.? & known & time & obj & init \\
			\midrule
			20 & 25 & 0 & 0 & yes & 1122.08 & 862.19 & 1125.55 & 1643.66\\
			20 & 25 & 0 & 1 & yes & 1343.89 & 685.22 & \textbf{1343.89} & 1771.98\\
			20 & 25 & 0 & 2 & yes & 1012.18 & 877.71 & \textbf{1012.18} & 1197.75\\
			20 & 25 & 0 & 3 & \textit{no} & \textit{1445.11} & 732.78 & \textit{1518.36} & 1784.42\\
			20 & 25 & 1 & 0 & yes & 991.01 & 776.51 & \textit{995.64} & 1195.65\\
			20 & 25 & 1 & 1 & yes & 1316.27 & 797.72 & \textbf{1316.27} & 1768.37\\
			20 & 25 & 1 & 2 & yes & 1094.52 & 549.41 & \textbf{1094.52} & 1399.41\\
			20 & 25 & 1 & 3 & yes & 991.20 & 702.07 & \textbf{991.20} & 1123.76\\
			20 & 50 & 0 & 0 & yes & 657.76 & 752.77 & \textbf{657.76} & 827.27\\
			20 & 50 & 0 & 1 & yes & 765.34 & 712.16 & 766.59 & 951.32\\
			20 & 50 & 0 & 2 & yes & 590.19 & 809.64 & \textbf{590.19} & 620.57\\
			20 & 50 & 0 & 3 & yes & 729.51 & 710.34 & \textbf{729.51} & 1049.81\\
			20 & 50 & 1 & 0 & yes & 571.33 & 678.59 & 571.48 & 635.55\\
			20 & 50 & 1 & 1 & yes & 625.52 & 674.42 & 628.49 & 755.16\\
			20 & 50 & 1 & 2 & yes & 548.63 & 565.51 & 548.74 & 737.98\\
			20 & 50 & 1 & 3 & yes & 551.01 & 603.31 & 554.73 & 610.62\\
			20 & 100 & 0 & 0 & yes & 301.95 & 989.00 & 302.12 & 314.61\\
			20 & 100 & 0 & 1 & yes & 476.50 & 810.92 & 476.71 & 527.38\\
			20 & 100 & 0 & 2 & yes & 363.01 & 902.16 & 363.12 & 443.13\\
			20 & 100 & 0 & 3 & yes & 488.63 & 1461.01 & 488.68 & 506.45\\
			20 & 100 & 1 & 0 & yes & 365.24 & 980.08 & 365.31 & 501.94\\
			20 & 100 & 1 & 1 & yes & 493.23 & 1072.18 & 493.28 & 584.07\\
			20 & 100 & 1 & 2 & yes & 415.11 & 1185.18 & 415.24 & 454.11\\
			20 & 100 & 1 & 3 & yes & 359.19 & 664.77 & 359.25 & 482.25\\
			20 & 200 & 0 & 0 & yes & 303.41 & 818.06 & 303.78 & 309.97\\
			20 & 200 & 0 & 1 & yes & 310.99 & 883.86 & 311.31 & 364.42\\
			20 & 200 & 0 & 2 & yes & 345.24 & 1064.08 & 345.28 & 367.84\\
			20 & 200 & 0 & 3 & yes & 285.33 & 796.83 & 285.70 & 290.13\\
			20 & 200 & 1 & 0 & yes & 305.00 & 1095.02 & 305.06 & 334.49\\
			20 & 200 & 1 & 1 & yes & 337.89 & 753.72 & 337.97 & 346.15\\
			20 & 200 & 1 & 2 & yes & 313.49 & 828.79 & 313.72 & 332.67\\
			20 & 200 & 1 & 3 & yes & 318.78 & 1098.68 & 318.96 & 365.87\\
			\botrule
		\end{tabular}
	\end{center}
\end{table*}

\begin{table*}
	\begin{center}
		\caption{Computational results for $n=30$}
		\label{tab:appendix-results-30}
		\begin{tabular}{@{}llll|l|l|rrr@{}}
			\toprule
			\multicolumn{4}{c|}{} & \multicolumn{1}{c|}{Flow}& \multicolumn{1}{c|}{Best}& \multicolumn{3}{c}{SA} \\
			$n$ & $P$ & adv.? & \# & Feas.? & known & time & obj & init \\
			\midrule
			30 & 25 & 0 & 0 & yes & 2088.94 & 1502.01 & \textbf{2088.94} & 2599.09\\
			30 & 25 & 0 & 1 & yes & \textit{3134.71} & 5.58 & \textit{\textbf{3134.71}} & 3211.39\\
			30 & 25 & 0 & 2 & yes & 2291.27 & 1406.69 & \textbf{2291.27} & 2744.97\\
			30 & 25 & 0 & 3 & yes & 2082.82 & 1424.99 & \textbf{2082.82} & 2860.31\\
			30 & 25 & 1 & 0 & yes & 2920.08 & 1142.54 & \textbf{2920.08} & 3380.82\\
			30 & 25 & 1 & 1 & yes & 2963.77 & 1483.27 & \textbf{2963.77} & 3870.67\\
			30 & 25 & 1 & 2 & yes & 3436.78 & 29.96 & \textbf{3436.78} & 3622.45\\
			30 & 25 & 1 & 3 & yes & 3237.29 & 1760.39 & \textbf{3237.29} & 4677.28\\
			30 & 50 & 0 & 0 & yes & \textit{2426.46} & 5.62 & \textit{\textbf{2426.46}} & 2476.53\\
			30 & 50 & 0 & 1 & yes & \textit{2538.60} & 5.75 & \textit{\textbf{2538.60}} & 2589.17\\
			30 & 50 & 0 & 2 & yes & 1052.26 & 1728.24 & \textbf{1052.26} & 1254.98\\
			30 & 50 & 0 & 3 & yes & \textit{2019.35} & 5.69 & \textit{\textbf{2019.35}} & 2020.03\\
			30 & 50 & 1 & 0 & yes & 1660.95 & 1601.76 & \textbf{1660.95} & 2041.24\\
			30 & 50 & 1 & 1 & yes & 1732.74 & 1614.14 & \textbf{1732.74} & 2391.34\\
			30 & 50 & 1 & 2 & yes & 1778.19 & 1381.26 & \textbf{1778.19} & 2190.11\\
			30 & 50 & 1 & 3 & yes & 1703.09 & 308.90 & \textbf{1703.09} & 2080.97\\
			30 & 100 & 0 & 0 & yes & 747.01 & 1591.51 & \textbf{747.01} & 1482.49\\
			30 & 100 & 0 & 1 & yes & 904.57 & 1628.79 & \textbf{904.57} & 1262.40\\
			30 & 100 & 0 & 2 & yes & 779.62 & 2140.44 & \textbf{779.62} & 929.26\\
			30 & 100 & 0 & 3 & yes & 865.12 & 1481.99 & \textbf{865.12} & 1160.84\\
			30 & 100 & 1 & 0 & yes & 836.81 & 2110.24 & \textbf{836.81} & 1118.03\\
			30 & 100 & 1 & 1 & yes & 976.40 & 1890.41 & \textbf{976.40} & 1419.84\\
			30 & 100 & 1 & 2 & yes & 754.56 & 1861.07 & \textbf{754.56} & 1085.14\\
			30 & 100 & 1 & 3 & yes & 695.52 & 1690.72 & \textbf{695.52} & 902.68\\
			30 & 200 & 0 & 0 & yes & 497.55 & 2059.61 & \textbf{497.55} & 549.87\\
			30 & 200 & 0 & 1 & yes & 486.30 & 1826.58 & \textbf{486.30} & 513.94\\
			30 & 200 & 0 & 2 & yes & 509.32 & 1685.78 & \textbf{509.32} & 552.84\\
			30 & 200 & 0 & 3 & yes & 538.68 & 2378.16 & \textbf{538.68} & 604.61\\
			30 & 200 & 1 & 0 & yes & 468.25 & 2631.23 & \textbf{468.25} & 623.84\\
			30 & 200 & 1 & 1 & yes & 586.33 & 1823.30 & \textbf{586.33} & 715.84\\
			30 & 200 & 1 & 2 & yes & 619.83 & 1826.30 & \textbf{619.83} & 645.95\\
			30 & 200 & 1 & 3 & yes & 568.19 & 2356.85 & \textbf{568.19} & 687.19\\
			\botrule
		\end{tabular}
	\end{center}
\end{table*}

\begin{table*}
	\begin{center}
		\caption{Computational results for $n=50$}
		\label{tab:appendix-results-50}
		\begin{tabular}{@{}llll|l|l|rrr@{}}
			\toprule
			\multicolumn{4}{c|}{} & \multicolumn{1}{c|}{Flow}& \multicolumn{1}{c|}{Best}& \multicolumn{3}{c}{SA} \\
			$n$ & $P$ & adv.? & \# & Feas.? & known & time & obj & init \\
			\midrule
			50 & 25 & 0 & 0 & yes & 6002.37 & 4293.20 & 6777.69 & 7958.75\\
			50 & 25 & 0 & 1 & yes & 6573.68 & 4665.56 & 7245.46 & 7509.22\\
			50 & 25 & 0 & 2 & yes & \textit{6423.54} & 5003.16 & \textit{\textbf{6423.54}} & 7689.15\\
			50 & 25 & 0 & 3 & yes & 7027.25 & 5916.94 & \textbf{7027.25} & 8145.16\\
			50 & 25 & 1 & 0 & yes & 6562.65 & 3602.98 & \textit{7196.96} & 7217.22\\
			50 & 25 & 1 & 1 & yes & 6257.97 & 5594.27 & \textit{7278.21} & 7520.76\\
			50 & 25 & 1 & 2 & yes & \textit{7239.58} & 6254.32 & \textit{\textbf{7239.58}} & 8007.63\\
			50 & 25 & 1 & 3 & yes & 7911.54 & 5082.28 & 7925.13 & 8873.06\\
			50 & 50 & 0 & 0 & yes & \textit{4176.25} & 5303.19 & 4469.83 & 5420.81\\
			50 & 50 & 0 & 1 & yes & \textit{5837.52} & 3731.27 & \textit{5857.73} & 5871.31\\
			50 & 50 & 0 & 2 & yes & 3167.27 & 2318.80 & \textit{3756.45} & 4459.79\\
			50 & 50 & 0 & 3 & yes & 3429.76 & 3484.34 & 4080.28 & 4731.48\\
			50 & 50 & 1 & 0 & yes & 3503.36 & 6897.78 & 3516.58 & 4041.55\\
			50 & 50 & 1 & 1 & yes & 3081.82 & 6070.92 & \textbf{3081.82} & 4950.52\\
			50 & 50 & 1 & 2 & yes & \textit{4326.41} & 2764.69 & \textit{4499.39} & 4509.67\\
			50 & 50 & 1 & 3 & yes & 3548.99 & 3541.65 & \textit{4359.87} & 4750.75\\
			50 & 100 & 0 & 0 & yes & 1609.52 & 8184.16 & \textbf{1609.52} & 1928.63\\
			50 & 100 & 0 & 1 & yes & 2205.73 & 8174.82 & \textbf{2205.73} & 2588.95\\
			50 & 100 & 0 & 2 & yes & 1658.84 & 3838.16 & \textit{2074.60} & 2127.92\\
			50 & 100 & 0 & 3 & yes & 1947.59 & 2484.76 & 2282.36 & 2622.42\\
			50 & 100 & 1 & 0 & yes & 1628.10 & 4110.64 & 1933.85 & 2240.80\\
			50 & 100 & 1 & 1 & yes & 1716.27 & 9738.11 & \textbf{1716.27} & 2096.80\\
			50 & 100 & 1 & 2 & yes & 1637.18 & 6344.71 & \textbf{1637.18} & 2402.85\\
			50 & 100 & 1 & 3 & yes & 1880.46 & 6946.31 & \textbf{1880.46} & 2358.75\\
			50 & 200 & 0 & 0 & yes & 1139.72 & 2123.86 & \textit{1384.30} & 1415.43\\
			50 & 200 & 0 & 1 & yes & 1237.01 & 2861.69 & 1352.59 & 1352.59\\
			50 & 200 & 0 & 2 & yes & 1298.67 & 12194.00 & 1306.81 & 1439.92\\
			50 & 200 & 0 & 3 & yes & 1211.63 & 9607.54 & 1219.67 & 1442.92\\
			50 & 200 & 1 & 0 & yes & 1052.00 & 13363.99 & 1060.47 & 1169.24\\
			50 & 200 & 1 & 1 & yes & 1265.67 & 2003.09 & \textit{1458.85} & 1459.74\\
			50 & 200 & 1 & 2 & yes & 1060.45 & 9351.82 & 1070.15 & 1223.95\\
			50 & 200 & 1 & 3 & yes & 1095.45 & 8013.34 & 1103.34 & 1216.94\\
			\botrule
		\end{tabular}
	\end{center}
\end{table*}





\bibliography{references}


\end{document}